\documentclass[review,times]{elsarticle}  
\usepackage{amsmath,amssymb,amsfonts,amsthm}

\newtheorem{theorem}{Theorem}

\newcommand{\sepa}{\sep}
\usepackage{xcolor}\newcommand{\rmrk}[1]{{\color{red}{[#1]}}}
\renewcommand{\rmrk}[1]{}
\newcommand{\marklast}{}
\newcommand{\be}[1]{\begin{equation}\label{#1}}
\newcommand{\ee}{\end{equation}}

\newcommand{\myfrac}[2]{ {#1}/{#2} }
\newcommand{\myfraca}[2]{ {(#1)}/{#2} }

\newcommand{\errmat}{{\varepsilon}}
\newcommand{\errmatall}{\errmat}

\newcommand{\errtabb}[1]{$\errmatall={#1}$}
\newcommand{\allerr}[1]{\errtabb{#1}}

\newcommand{\pspan}[1]{{ {\mathrm{span}} \left( {#1} \right) }}

\newcommand{\nnode}{{n_{p}}}

\newcommand{\bfn}{{\mathbf{n}}}
\newcommand{\bfu}{{\mathbf{u}}}
\newcommand{\bfx}{{\mathbf{x}}}

\newcommand{\trp}{{\scriptscriptstyle\mathsf{T}}}

\newcommand{\ndim}{d}

\newcommand{\mquad}[1]{$m_{#1}$}
\newcommand{\mquadopt}[1]{$m_{#1}^{\mathrm{opt}}$}
\newcommand{\stquad}[1]{$s_{#1}$}
\newcommand{\stquadopt}[1]{$s_{#1}^{\mathrm{opt}}$}

\newcommand{\KK}{\mathbf{K}}
\newcommand{\bfA}{\mathbf{A}}
\newcommand{\bfB}{{\mathbf{B}}}

\newcommand{\AL}{\bfA^{\textrm{L}}}
\newcommand{\dinner}{d_{\mathrm{inner}}}
\newcommand{\Smplx}{{\mathcal{S}}}
\newcommand{\bfBnum}{{\overline{\mathbf{B}}}}
\newcommand{\bub}{\eta} 
\newcommand{\al}{a}
\newcommand{\Permfun}[1]{S\!\left[{#1}\right]}

\newcommand{\CFL}{\sigma}
\newcommand{\CFLnum}{{\bar\sigma}}

\newcommand{\toprule}{\hline}
\newcommand{\midrule}{\hline}

\begin{document}

\newcommand{\mytitle}{Quadrature rules for the mass and stiffness matrices of finite elements for the wave equation on the 2-, 3- and 4-simplex}
\newcommand{\myrunning}{Quadrature rules for mass and stiffness matrices}
\newcommand{\mysumm}{
Mass lumping enables explicit time stepping for finite-element discretisation of the wave equation if the resulting quadrature weights are positive and accuracy is preserved. New rules for elements of degree two and three on the 4-simplex  are presented. Numerical quadrature for the stiffness matrix can be more efficient than exact evaluation if it requires fewer nodes. For the latter, new rules in two and four space dimensions for the lower-degree elements on the simplex were found, as well as some additional results for three dimensions.
}

\newcommand{\mykeywords}{
  quadrature \sepa simplex \sepa node patterns \sepa polynomial \sepa finite elements \sepa wave equation}
\newcommand{\mymsc}{65D32 \sepa 65N30} 

\newcommand{\myname}{W.~A.~Mulder}
\newcommand{\myemaildelft}{w.a.mulder@tudelft.nl}
\newcommand{\delftaddress}{Department of Geoscience \& Engineering,
Faculty of Civil Engineering and Geosciences,
Delft University of Technology,
Delft, The Netherlands.}
\newcommand{\myorcid}{Orcid: 0000-0001-7020-9297}

\begin{frontmatter}

  \title{\mytitle}

 \author{\myname}
 \affiliation{organization={Delft University of Technology, Department of Geoscience \& Engineering, Faculty of Civil Engineering and Geosciences}, addressline={Stevinweg 1}, city={Delft}, postcode={2600~GA},country={The Netherlands}}

\begin{abstract} \mysumm \end{abstract}

\begin{keyword} \mykeywords

\MSC  \mymsc
\end{keyword}

\end{frontmatter}

\section{Introduction}\label{sec:intro}

When solving the wave equation, finite elements can be more efficient than finite differences if the mesh scales with the local (shear) velocity and if the element faces follow the discontinuities of the material properties \cite[e.g.]{ref:mulder1996,ref:kononov,ref:zhebel2014}.
Mass lumping avoids the inversion the large sparse mass matrix and allows for explicit time stepping, resulting in a scheme with a computational structure similar to the finite-difference method. Straightforward lumping for standard polynomial  simplicial elements leads to a loss of spatial accuracy, which can be restored by the use of higher-degree polynomials in the interior of the element, as demonstrated for triangles \cite{ref:tordjmana,ref:tordjmanc} and tetrahedra \citep{ref:mulder1996}.

The construction of mass-lumped elements involves a choice of quadrature nodes and the requirement that quadrature be exact for a certain set of polynomials.
The solution of the resulting polynomial system of equations provides the actual node positions and quadrature weights, but its complexity explodes rapidly when the number of nodes and polynomial degree increases.

So far, triangular elements could be constructed for degrees
2 \citep{ref:crouzeix1973,ref:zien1973}, 
3 \citep{ref:tordjmanb,ref:tordjmanc},
4 \citep{ref:mulder1996},
5 \citep{ref:chin},
6 \citep{ref:mulder2013},
7 and 8 \citep{ref:liu,ref:cui2017}. The last two papers also contain degree-9 elements,
but the one in \citep{ref:cui2017} has degree 10 instead of 9 on the edges
whereas the element in \citep{ref:liu}, when used as an initial guess,
does not seem to converge to a solution of the quadrature equations when very high extended precision is used in a Newton-type root-finding method.
Nevertheless, the proposed element appears to be sufficient for applications with double-precision computations.

Tetrahedral elements were initially found for degree 2 \citep{ref:mulder1996} and 3 \citep{ref:chin}.
A sharper accuracy criterion was proposed in \cite{ref:geevers_new}, leading to new tetrahedral elements of degree 2 and 3, as well as several of degree 4.
In that paper, the degree-4 elements had a subset of the polynomials with degree higher than 4.
Here, elements of that type will not be considered.
The same sharper accuracy criterion also led to simpler triangular elements of degree 5 and 6 \citep{ref:mulder2022,ref:mulder2024perf}.

Explicit time stepping for the wave equation involves repeated multiplications of the stiffness matrices and the solution vectors. Evaluation on the fly instead of full assembly of these matrices can speed up the computations \cite[e.g.]{ref:sham2016perf}.
Numerical instead of exact quadrature for the stiffness matrices can further reduce the required number of operations \cite{ref:geevers_stiff}.

\begin{sloppypar}
The accuracy of the time stepping scheme is another topic.
One option is higher-order time stepping with the Lax–Wendroff approach \cite{ref:laxwendroff}, also known as the Cauchy–Kowalevsky\cite{ref:kowalevsky}
or Dablain \cite{ref:dablain} or modified-equation method \cite{ref:shubin}.
Another with low cost is dispersion correction of the second-order scheme \cite{ref:stork,ref:koene,ref:mulderdisp} \rmrk{more}.
\end{sloppypar}

The goal of the current paper is to extend mass lumping to 4D.
In addition, dedicated numerical quadrature rules for the stiffness matrices in two to four dimensions are constructed, using the same criteria as proposed in \cite{ref:geevers_stiff}, where they were applied to tetrahedra. New rules for  the stiffness matrices are provided for the triangle and 4-simplex, as well as some additional ones for the tetrahedron.
These rules depend on the basis functions that follow from the construction of the mass matrix. The number of required nodes tends to be less than with the generic quadrature rules
that are exact for polynomials up to a given degree, a recent collection of which can be found in \cite{ref:worku2026}.

Section~\ref{sec:method} reviews the construction of quadrature rules for polynomial basis functions and
a symmetric set of nodes, both for the mass and for the stiffness matrices.
Section~\ref{sec:results} contains the results for lower-degree elements on the 2-, 3- and 4-simplex.
A large number of potential candidates were considered. For the lower-degree polynomial systems of quadrature equations, the existence or non-existence of positive solutions could usually be demonstrated, but for higher degrees and higher dimensions, an exhaustive treatment proved impossible due to the computational complexity of many cases.
Section~\ref{sec:conc} concludes.

\rmrk{check literature again! Gauss ? Walkington? many more \citep{ref:hammer1956}, \citep{ref:zhang2009}}

\section{Quadrature reviewed}\label{sec:method}

Mass lumping replaces the mass matrix by a diagonal matrix. The values on the diagonal are equal, or proportional to, numerical quadrature weights. For finite-element discretisations of the acoustic or elastic wave equation, these weights should be positive for stable explicit time stepping while preserving the spatial accuracy.

To find suitable weights, which are proportional to numerical quadrature weights, polynomial basis functions are chosen. The symmetric node pattern follows from a necessary condition for unisolvence \cite{ref:mulderuni}. Requiring the rule to be exact for certain polynomials leads to a polynomial system. Its solution, if it exists, provides the weights and the parameters for the node positions. Solving this system is the biggest hurdle in the construction of the elements.

In what follows, the choice of polynomials, symmetry classes on the simplex in two to four dimensions,
and the accuracy criteria for quadrature of the mass and stiffness matrices are reviewed.

\subsection{Polynomials}
Straightforward lumping of the mass matrix with a regular distribution of nodes on the simplex causes a loss of spatial accuracy. This can be amended by using polynomials of a higher degree in the interior of the $m$-faces.

A hierarchical polynomial space can be defined with the aid of bubble functions \citep[e.g.]{ref:mulderuni}.
The barycentric coordinates on the simplex are defined as $(x_0,x_1,\ldots,x_\ndim)$
with $x_0=1-\sum_{k=1}^{\ndim} x_k $. A bubble function on an $m$-face, starting at the edges for $m=1$, is
given by $\bub_{m}=\prod_{k=0}^m x_k$ on one face.
The bubble functions on the other faces follow from $\Permfun{\bub_m}$, where the permutation operator $\Permfun{f}$ acting on a function $f(x_0,x_1,\ldots,x_\ndim)$ produces a set of functions from all permutations of its arguments.
The same notation will be used for a set of functions, with the operator applied to each element.

The monomials of degree $p\ge 0$ on one $m$-face can be defined as
\be{eq:monomials}
P_p(x_0,\ldots,x_m) = \left\{
    \prod_{k=0}^m x_k^{\ell_k}  \ \bigg \vert\
      \sum_{k=0}^{m} \ell_k = p,\ \ell_k \in {\mathbb{N}}_0
\right\}.
\ee
Note that $(x_1,\ldots,x_m)$ defines a point on that $m$-face, but $x_0$ is defined on the $\ndim$-simplex.
For $p<0$, the set is taken as empty.
With that, the polynomial space with different degrees on the $m$-faces is defined as
\be{eq:polspace}
U(p_0,p_1,p_2,\ldots,p_\ndim)=\bigcup_{m=0}^\ndim \Permfun{V_m},
\ee
where, for $m\ge 0$,
\be{eq:vv}
V_m=\left\{ \bub_m \right\}\otimes P_{p_m-m-1}(x_0,x_1,\ldots,x_m).
\ee
The space $U$ is face-conforming, leading to finite-element solutions that are continuous across elements.
In this paper, elements of degree $p_1$ on the edges and including the vertices are considered, with $p_0=p_1$ as the base degree of the element, so we must have $P_{p_1}\subseteq U(p_1,p_2,\ldots,p_\ndim)$.
In addition, subsets of the higher-degree subspaces for $m>1$ of $U$, as used in \citep{ref:geevers_new}, are not considered.

Note that $\Permfun{V_\ndim}=V_\ndim$ in the interior of the simplex
and that $V_0=\{x_0^{p_0}\}$. The latter can be replaced by $V_0=\{x_0\}$ in the definition of $U$ in
equation~\eqref{eq:polspace} if $p_0=p_1$ and $P_{p_1}\subseteq U$, as proven in \ref{app:lower}.
Lower degrees lead to simpler expressions in the systems of polynomial equations that have to be solved
to find the quadrature rules.

\subsection{Quadrature for the mass matrix}
Numerical quadrature requires a set of nodes on the simplex. Here, only symmetric arrangements are considered.
A symmetric set of nodes can be divided into equivalence classes by selecting subsets
for which two or more of the barycentric coordinates $x_k$ are equal \citep[e.g.]{ref:keast1986,ref:mulderuni}.
An interior node for the $\ndim$-simplex
in class $\bfn=[n_1,n_2,\ldots,n_k]$, with $n_1\ge n_2\ge\ldots\ge n_k>0$ and $\sum_{\ell=1}^{k} n_\ell=\ndim+1$,
has a position $\bfx=(a_1,a_1,\ldots,a_1,a_2,\ldots,a_2,\ldots,a_k,\ldots,a_k)$ in barycentric coordinates
with node parameter $a_\ell$ repeated $n_\ell$ times and with distinct $a_\ell>0$ for $\ell=1,\ldots,k$.
Given such a node $\bfx$, the other nodes in the same symmetry class follow from the permutations of its coordinates.

The nodes on the $m$-faces can be partitioned into  equivalence classes in a similar manner, for $m$ space dimensions.
They can then be embedded in $\ndim$ dimensions by zero padding, setting the missing $\ndim-m$ coordinates to zero
and permuting the coordinates to obtain all nodes for that class.

Table~\ref{tab:classd} lists the symmetry classes on the simplex for dimensions 0 to 4.
A node pattern $\KK$ characterises
the number of times $K_j$ that a generating node occurs in a given class enumerated by an index $j$,
which runs over the equivalence classes from lower to higher dimensions.
The classes $\bfn$ are represented by the sets of integer partitions of $m+1$, ordered
in the usual way from high to low and short to long, and the index $j$ follows that order.
If $\bfn$ has length 1 for some index $j$, $K_j=1$, that is, there can only be one generating node in that dimension.
For $m<\ndim$, the zero padding and permutations will then still produce a total of $\binom{\ndim+1}{m+1}$ nodes.

In this paper, $p_0=p_1\ge 1$. As a result, the vertices are always included and $K_1=1$.
On edges, the midpoint is included if $p_1$ is even, that is $K_2=1$ if $p_1$ even and $K_2=0$ otherwise.
There are $2 K_3$  other points on each edge, where $K_3=(p_1-K_2-K_1)/2$, and there are $\binom{\ndim+1}{2}$ edges.

Given the set of monomials in $U$, a necessary condition for unisolvence automatically defines the node pattern,
as conjectured in \citep{ref:cui2017}, proven for triangles in \citep{ref:mulder2022},
for the $\ndim$-simplex in \citep{ref:mulderuni} and further generalised in \citep{ref:xie2025}.
The symmetry classes of the monomials should match those of the generating nodes.

With the polynomial space $U$ and associated node pattern,
we can attempt to find a quadrature rule by requiring it to be exact
for all polynomials in  $P_{p_1-2}\otimes U$ \citep{ref:geevers_new}.
The resulting system of polynomial equations contains the quadrature weights and
node parameters as unknowns. It can be inconsistent, zero-dimensional, or positive-dimensional,
meaning there are no solutions, a finite number of complex solutions, or an infinite number of them.
Here, only real and positive solutions are of interest, with the obvious additional constraints that nodes
should stay in there symmetry class and multiple generating nodes in the same class should remain distinct,
including there permutations.
Given the computational complexity of these problems, finding quadrature rules for higher degrees
is difficult.

\rmrk{the following is not used ...? what about $a=(1-\sqrt{1-\alpha})/2$ for class 3? occurs as
$a$ and $1-a$, odd powers cancel. Also $b=(1-\sqrt{1-8 \beta})/4$  for class 9}

The node parameters are denoted by $\al_{jk\ell}$ for a node at $K_j$, with $k=1,\ldots,K_j$ and $\ell=1,\ldots,\ell_{\max}$ and $\ell_{\max}\le \ndim$, where the upper bound $\ndim$ may not be reached if
some the nodes parameters in the equivalence class are the same.
The corresponding weights could be enumerated by $w_{jk}$, but here, they are just numbered consecutively, starting with $w_1$ for the vertices.

\begin{table}[ht]
  \caption{Classes for node patterns $\KK$ up to 4D.
    The list contains the index $j$ for $K_j$ followed by the symmetry class of the node.}\label{tab:classd} 
{ \footnotesize
  \setlength{\tabcolsep}{4pt} 
   \begin{center}\begin{tabular}{c l l l l l l l}
$m$-face & \multicolumn{5}{l}{index:class} \\
\hline
0&\phantom{1}1:[1]\\
1& \phantom{1}2:[2] &\phantom{1}3:[1,1]\\
2&\phantom{1}4:[3] &\phantom{1}5:[2,1] &\phantom{1}6:[1,1,1]\\
3&\phantom{1}7:[4] &\phantom{1}8:[3,1] &\phantom{1}9:[2,2] & 10:[2,1,1] & 11:[1,1,1,1] \\
4& 12:[5]& 13:[4,1] & 14:[3,2] & 15:[3,1,1]& 16:[2,2,1]& 17:[2,1,1,1]& 18:[1,1,1,1,1]\\
\end{tabular}\end{center} }
\end{table}

The mass matrix $\bfA$ has entries $a_{i,j}=\int_{\mathcal{S}} \psi_i \psi_j \,d\bfx$.
The stiffness matrices $\bfB^{k,\ell}$ have entries
$b_{i,j}^{k,\ell}=\int_\Smplx \frac{\partial\psi_i }{\partial x_k}\frac{\partial\psi_j}{\partial x_\ell} \,d\bfx$
for $k,\ell=1,2,\ldots,\ndim$,
where the integration is carried out over the reference simplex $\Smplx$ with $0\le x_j \le 1$ for $j=0,1,\ldots,\ndim$.
The lumped mass matrix $\AL$ is diagonal with values equal to the row sums of $\bfA$, which are the same as the quadrature weights.
\rmrk{reference for proof?}

The performance of a finite-element discretization of the acoustic or elastic wave equation
depends on the degree of the polynomial basis, the number of nodes, and the maximum allowable time step,
which is limited by the Courant-Friedrichs-Lewy (CFL) number \citep{ref:cfl,ref:mulder2013}.
A second-order time-stepping schemes involves the operator $\left(\AL\right)^{-1} \sum_k \bfB^{k,k}$. Its largest
eigenvalue $\lambda_{\max}$ determines the CFL number for the reference simplex: $\CFL=(2/\dinner)/ \sqrt{\lambda_{\max}}$.
Here, $\dinner=2/(\ndim+\sqrt{\ndim})$ is the diameter of the inscribed sphere for the reference simplex.
The results provides a reasonable estimate, although in \citep{ref:geevers_new,ref:mulder2024perf},
the power method provided slightly larger results.

If the polynomial system of equations has multiple solutions,
it may be expected that a larger CFL number will lead to better performance
and one could select the element with the largest estimated CFL number, if the approximation error
does not change too much among various options. In the case of infinitely many solutions, which typically occurs
if there are less equations than unknowns in the polynomial system, numerical maximisation for the CFL number
may be useful. 
Another performance measure is the Lebesgue constant 
$\Lambda={\max}_{ {\bfx} \in { \Smplx} } \sum_{j=1}^{\nnode} \vert  \psi_j(\bfx) \vert $,
where $\bfx$ are the barycentric coordinates in the reference simplex $\Smplx$ and the $\psi_j(\bfx)$ the Lagrange interpolating functions serving as basis functions, defined on $\nnode$ nodes. Smaller measures indicate better accuracy, when comparing elements of the same degree.

\subsection{Quadrature for the stiffness matrix}
Given a quadrature rule for the mass matrix, the Lagrange basis functions $\psi_i$ can be constructed.
Exact integration then provides the stiffness matrices on the reference element.
Since the matrix-vector multiplication with these matrices are the most costly part of a finite-element
code with explicit time stepping \citep{ref:sham2016perf},
numerical integration can be less costly.
In \cite{ref:geevers_stiff}, we proposed the use of numerical quadrature with
the requirement that it be exact for polynomials
in the set $P_{p_1-1}\otimes D U $ and is spurious-free.
Here, $D U$ the set of polynomials obtained by taking partial derivatives of the
polynomials in $U$ with respect to the coordinates $x_1,x_2,\ldots,x_\ndim$.
A spurious-free quadrature rule has the property that $\nabla u=\mathbf{0}$ with $u\in U$, evaluated on all the quadrature nodes, only has the constant solution.
This can be verified for $u=\sum_{j=1}^\nnode u_j \psi(\bfx)$ by considering the null space  in terms of $u_j$ of the set of equations $\myfrac{\partial u}{\partial x_k}(\bfx_j)=0$ with $k=1,\ldots,\ndim$ and $j=1,\ldots,\nnode$.
Likewise, the stress tensor $\nabla \bfu+(\nabla \bfu)^\trp$  in the elastic wave equation should
vanish only for rigid motions when evaluated at all the nodes.
Among the results, there were several cases were the spurious-free condition was not met, usually for the quadrature rules with a relatively small number of nodes.

To find potential candidates,  all symmetric node patterns up to a maximum number of nodes are considered,
in contrast to the mass matrix where the node pattern follows from the chosen set of polynomials.
Each pattern leads to a system of polynomial equations that may or may not have one or more solutions.
If there is a solution, it should be verified that it has positive  weights and is spurious-free.
The pattern with the smallest number of nodes is expected to have the smallest computational cost.

If there are multiple or infinitely many solutions for one pattern or several patterns with the same number of nodes,
additional requirements may be imposed.
For the  mass matrix, one could for instance maximise the CFL number,
as in \citep{ref:mulder2013} and \citep{ref:mulder2022}.
For the stiffness matrix, the difference between the directly evaluated stiffness matrices $\bfB^{k,\ell}$ and
the result ${\bfBnum}^{k,\ell}$ of numerical quadrature can be minimised.
One option is to minimise 
$\| {\bfB}^{1,1} - {\bfBnum}^{1,1} \|^2$, in some norm, which is the same
as minimising $\sum_{k=1}^\ndim  \| {\bfB}^{k,k} - {\bfBnum}^{k,k} \|^2$, because the matrices for one value of $k$ can be found from another by
a permutation. This is suitable for the scalar wave equation.
For the elastic case,
one could consider the minimisation of
$\errmatall^2=\sum_{k=1}^\ndim\sum_{\ell=1}^\ndim  \| {\bfB}^{k,\ell} - {\bfBnum}^{k,\ell} \|^2$.
Because of symmetry, this simplifies to
\be{eq:bsymerr}\errmatall^2 =
 \sum_{k=1}^\ndim\sum_{\ell=1}^\ndim  \| \bfB^{k,\ell}-{\bfBnum}^{k,\ell} \|^2 =
 \ndim \| \bfB^{1,1}-{\bfBnum}^{1,1} \|^2 +
 \ndim(\ndim-1)\| \bfB^{1,2}-{\bfBnum}^{1,2} \|^2.
\ee
Here, the Frobenius norm $\| \bfB \|=\left[ {\mathrm{trace}}\left( \bfB^\trp \bfB \right)\right]^{1/2}$ is
chosen. Recall that the squared Frobenius norm can also be expressed
as $\| \bfB \|^2=\sum_{i=1}^{\nnode}  \sum_{j=1}^{\nnode} \vert b_{i,j} \vert^2$ and equals the sum of squared eigenvalues.

\section{Results}\label{sec:results}

Mathematica\textsuperscript{\textregistered} \cite{ref:Mathematica} was used to generate and solve the polynomial systems and to carry out the
optimisation. Extended precision was used throughout, but the results are only shown with 24 digits.
Several workarounds for Mathematica's idiosyncrasies were implemented.
Many of the optimisation results were obtained in different ways, but it cannot be  guaranteed
that the global minimum or maximum was found.
Also, in the many cases considered, the existence of non-existence of solutions of the polynomial system could not be established.

\rmrk{ (Hammer and Stroud 1956 ?) Degree 2 has $w_1=1/(\ndim+1)!$ on ??? Dunavant? check for known, \cite{ref:pap}, Zhang??  \cite{ref:worku2026} }

\rmrk{PROBLEM: cannot trust min and max optimisation results ... FindMinimum is local, NMinimize often as well:
  ``Strictly speaking, Nelder–Mead is not a true global optimization algorithm; however, in practice it tends to work reasonably well for problems that do not have many local minima.''
  }

\rmrk{{\textbf{not}} exhaustive, some cases are, mention which, state open problems?}

\subsection{Linear element}

The linear degree-1 element in $\ndim$ dimensions has quadrature weights $1/(\ndim+1)!$ on each of the $\ndim+1$ vertices.
For barycentric coordinates $x_k$, $k=0,1,\ldots,\ndim$,
with $x_0=1-\sum_{k=1}^{\ndim} x_k$,  
the basis functions are $\psi_k(x_1,\ldots,x_\ndim)=x_k$.
The CFL number on the reference simplex is
$\CFL=(\ndim+\sqrt{\ndim})/ (\ndim+1)$,
equalling $1$, $1.138$, $1.183$, $1.2$, $1.206$, $1.20707$ and $1.20572$
for $\ndim=1,2,...,7$ and decreasing for large $\ndim$. The maximum is reached for $\ndim=6$.
For the numerical quadrature of the stiffness matrix, the centroid at $x_k=1/(\ndim + 1)$ for $k=0,1,\ldots,\ndim$
with weight $1/\ndim!$ suffices.

\subsection{Triangle}

Table~\ref{tab:twoa} lists a known triangular element \citep{ref:crouzeix1973,ref:cohen} of degree 2.
The Lebesgue constant $\Lambda=(187+38\sqrt{19})/243$.
For the stiffness matrix, there are multiple quadrature rules with 6 nodes.
The first one, denoted by \stquad{1}, is the same as ``$d = 3$ (degree), $n_p = 6$ (number of nodes),
type $[0,0,1]$, quality PI'' in \citep{ref:pap}, or $\KK=[0{;}0{,}0{;}0{,}0{,}1]$ in the current notation.
It has a weight $w_1=\myfrac{1}{12}$ and overall error \allerr{33/(4\sqrt{10})}.
The estimated CFL number with numerical quadrature is denoted by $\CFLnum$ and is slightly larger than the $\CFL$ obtained with exact evaluation of the stiffness matrices.

\begin{sloppypar}
The second one, labelled as \stquad{2}, can be found in \citep{ref:pap} as
``$d = 4$, $n_p = 6$, type $[0, 2, 0]$, quality PI'' or $\KK=[0{;}0{,}0{;}0{,}2{,}0]$.
The weights are $w_{1,2}=\{620-[{155 (1375\mp 344 \sqrt{10})}]^{1/2}\}/{7440}$
with node parameters $\{40-5\sqrt{10}\mp [10(95-22 \sqrt{10})]^{1/2}\}/90$.
It produces the same result as the exact stiffness matrices because the degree 4
is one higher than required for accuracy. 
This is the best choice among the infinitely many solutions for this node pattern.
\end{sloppypar}

The other two rules, \stquad{3} and \stquad{4} have node patterns
$[0{;}1{,}0{;}0{,}1{,}0]$ and
$[1{;}0{,}0{;}0{,}1{,}0]$, respectively.
Rule \stquad{3} has weights $w_1=\myfrac{1}{60}$ and $w_2=\myfrac{3}{20}$ with a node parameter $\myfrac{1}{6}$ and error \allerr{33/(4\sqrt{10})}.
The weights for \stquad{4} are $w_1=(1+\sqrt{21})/{240}$ and $w_2=(39-\sqrt{21})/{240}$ with node parameter $(9+\sqrt{21})/{30}$ and
error \allerr{1089(11-\sqrt{21})/500}.

\begin{table}
  \caption{Quadrature rules in 2D for degree 2 ($p_0=p_1=2$, $p_2=3$) with 
    7 nodes for the mass matrix
    and  four different patterns labelled \stquad{1} to \stquad{4} with 6 nodes for the stiffness matrix.
    For the second pattern \stquad{2}, there are infinitely many solutions
    of which the listed one is the best, since it is exact.}\label{tab:twoa}\medskip
{\scriptsize{
     \setlength{\tabcolsep}{6pt} 
  \begin{tabular}{l r l  l  l }
    rule & class & weights & node parameters & remarks \\
    \toprule
\mquad{2,3}& 1& $\myfrac{1}{40}$ & & $\CFL=0.3674$ \\ 
& 2& $\myfrac{1}{15}$ & &$\Lambda=1.451$ \\ 
& 4& $\myfrac{9}{40}$ & &\\ 
\midrule
\stquad{1}& 6& 0.0833333333333333333333333 & 0.109039009072877212324835 &\allerr{2.609}\\ 
& & & 0.231933368553030572496785 &$\CFLnum=0.3844$\\
\stquadopt{2}& 5& 0.0549758718276609338191632 & 0.0915762135097707434595715 & \allerr{0}\\ 
& 5& 0.111690794839005732847504 & 0.445948490915964886318329 &$\CFLnum=0.3674$\\ 
[3pt]\stquad{3}& 2& 0.0166666666666666666666667 & & \allerr{2.609}\\ 
& 5& 0.15& 0.166666666666666666666667 & $\CFLnum=0.3844$\\ 
[3pt]\stquad{4}& 1& 0.0232607320623160000274502 & &\allerr{3.739}\\ 
& 5& 0.143405934604350666639216 & 0.452752523165194666886268 & $\CFLnum=0.3187$\\ 

\marklast\end{tabular} }}\end{table}

Table~\ref{tab:twob} lists the known 12-node triangular element of degree 3 \citep{ref:cohen,ref:tordjmana}.
The weights are
$w_1=\myfraca{8-\sqrt{7}}{720}$,
$w_2=\myfraca{7+4 \sqrt{7}}{720}$ and
$w_3=\myfrac{7 (14-\sqrt{7})}{720}$ with node parameters
$(21-[21(-7+4 \sqrt{7})]^{1/2})/42 $ and $\myfraca{7-\sqrt{7}}{21} $.
The quadrature rule for the stiffness matrix is the same as in \citep{ref:dunavant1985},
apart from a factor $1/\ndim$ for the weights in the current paper. It is the unique solution,
modulo symmetry, of a polynomial system of 5 equations in 5 unknowns,
with weights $w_1=\myfrac{9}{80}$, $w_{2,3}=\myfraca{155\mp\sqrt{15}}{2400}$,
node parameters $(6\mp\sqrt{15})/21$ and error \allerr{\left[ 1313 (198671-30520\sqrt{7})/5\right]^{1/2}\!/1296}.
Exact quadrature of the stiffness matrix can be obtained with 12 nodes and either of the two rules
``$d = 6$, $n_p = 12$, type $[0,2,1]$, quality PI'' in \citep{ref:pap}.

\begin{table}
  \caption{Quadrature rules in 2D for degree 3 ($p_0=p_1=3$, $p_2=4$) with 
    12 nodes for the mass matrix
    and 7 nodes for the stiffness matrix. For the latter, the quadrature rule is the same as in \citep{ref:dunavant1985}.
  }\label{tab:twob}\medskip
  {\scriptsize{ 
    \begin{tabular}{l r l  l  l }
    rule & class & weight & node parameters & remarks \\
    \toprule
\mquad{3,4}& 1& 0.00743645651241029084652553 & &$\CFL=0.2102$ \\ 
& 3& 0.0244208406170255032805645 & 0.293469555909040190389804 &$\Lambda=2.218$\\ 
& 5& 0.110388528920205369259012 & 0.207345175663590924261828 &\\ 
\midrule
\stquad{1}& 4& 0.1125 & &\allerr{4.294}\\ 
& 5& 0.0629695902724135762978420 & 0.101286507323456338800987 &$\CFLnum=0.2139$\\ 
& 5& 0.0661970763942530903688247 & 0.470142064105115089770441 &\\ 

\marklast\end{tabular} }}\end{table}

\begin{table}
  \caption{Quadrature rules in 2D for degree 4 with 
    18 nodes for the mass matrix.
    There are 4 patterns with  15 nodes for the stiffness matrix. The one with 16 nodes, \stquad{5}, is exact.
  }\label{tab:twoc}\medskip
  {\scriptsize{ 
    \begin{tabular}{l r l  l  l }
    rule & class & weight & node parameters & remarks \\
    \toprule
\mquad{4,5}& 1& 0.00317460317460317460317460 & &$\CFL=0.1281$ \\ 
& 2& 0.0126984126984126984126984 & &$ \Lambda=2.650$ \\ 
& 3& 0.0107142857142857142857143 & 0.211324865405187117745426 &\\ 
& 5& 0.0505838648956875558102522 & 0.130791593829744967194355 &\\ 
& 5& 0.0787812144693918092691129 & 0.424763961725810588361201 &\\ 
\midrule
\stquad{1,1}& 5& 0.0265389008951162058359775 & 0.0649305131591648630783798 &\allerr{4.941} \\ 
& 6& 0.0346373410397084467561383 & 0.0438634717923724715117987 & $\CFLnum=0.1344$\\ 
& & & 0.313559184384931507955852 &\\
& 6& 0.0354265418460667836592063 & 0.198384476681506719179877 &\\ 
& & & 0.284575584249170335197416 &\\
\stquad{1,2}& 5& 0.0626968037246515334107852 & 0.243259139835607536562462 &\allerr{5.429}\\ 
& 6& 0.0138317623007367135917420 & 0.0457208298463203239115760 &$\CFLnum=0.1371$\\ 
& & & 0.0866366313417490017137638 &\\
& 6& 0.0381531691702708530361988 & 0.0507143843072070430506436 &\\ 
& & & 0.318644189847537054206171 &\\
[3pt]\stquadopt{2}& 5& 0.00598871750086874787313223 & 0.0248873175547711681618438 &\allerr{1.786}\\ 
& 5& 0.0645271001594989722247439 & 0.240852354623800158854260 &$\CFLnum=0.1304$\\ 
& 5& 0.0399437127829381765035879 & 0.474313723462548969585082 &\\ 
& 6& 0.0281035681116803850326013 & 0.0457724799258222965927693 &\\ 
& & & 0.764938750987214477570869 &\\
\stquad{3}& 3& 0.0151405082189550635241782 & 0.311177333051762799603027 &\allerr{12.85}\\ 
& 5& 0.0102531798086683692362102 & 0.0359210001237433518114575 &$\CFLnum=0.09904$\\ 
& 5& 0.0481564641342274957821757 & 0.136412325984609344710820 &\\ 
& 5& 0.0779760062858606745999243 & 0.424933799107976639983030 &\\ 
[3pt]\stquad{4}& 1& 0.00256395435230237177356845 & & \allerr{2.584}\\ 
& 5& 0.0663059700986529548882131 & 0.238561530018070755823421 & $\CFLnum=0.1219$\\ 
& 5& 0.0419943898833667692444273 & 0.474388086175154277132999 &\\ 
& 6& 0.0279011761661722853802289 & 0.0421382841642368479501656 &\\ 
& & & 0.173896050734549278928165 &\\
[3pt]\stquad{5}& 4& 0.0721578038388935841255456 & &\allerr{0}\\ 
& 5& 0.0162292488115990401554630 & 0.0505472283170309754584236 &$\CFLnum=0.1281$\\ 
& 5& 0.0516086852673591251408958 & 0.170569307751760206622294 &16 nodes\\ 
& 5& 0.0475458171336423123969481 & 0.459292588292723156028816 & \\ 
& 6& 0.0136151570872174971324223 & 0.00839477740995760533721383 &\\ 
& & & 0.263112829634638113421786 &\\

  \marklast\end{tabular} }}
\end{table}

For the triangular degree-4 element found in \cite{ref:mulder1996}, Table~\ref{tab:twoc}
repeats the quadrature rule for the mass matrix. The weights are
$w_1=\myfrac{1}{315}$, $w_2=\myfrac{4}{315}$, $w_3=\myfrac{3}{280}$, $w_{4,5}=\myfraca{1141\mp 94 \sqrt{7}}{17640}$
with node parameters $\left(5\mp\sqrt{7}\right)/18$.
The table lists 5 rules for the stiffness matrix, four with 15 nodes and one with 16.
For the node pattern $\KK=[0{;}0{,}0{;}0{,}1{,}2]$, there are two
solutions, modulo symmetry, with \stquad{1,1} the more accurate one.

There are 8 equations with 9 parameters for node pattern $\KK=[0{;}0{,}0{;}0{,}3{,}1]$. Optimization produced
\stquadopt{2}, which hopefully is the global minimum.

The pattern $\KK=[0{;}0{,}1{;}0{,}3{,}0]$ leads to a unique solution \stquad{3},
as does $\KK=[1{;}0{,}0{;}0{,}2{,}1]$ for \stquad{4}.
For 16 instead of 15 nodes, the rule \stquad{5} from \citep{ref:pap} is exact. With the accuracy criterion used here, it represents one of many solutions. It is unique for the requirement that quadrature be exact for polynomials up to degree 8.

\subsection{Tetrahedron}

\begin{sloppypar}
For 3D, a new degree-2 element with 14 nodes is listed Table~\ref{tab:threea}.
There are infinitely many solutions of the form
\be{eq:tet205}
w_1=\tfrac{1}{24}-\tfrac{3}{2}w_2 - w_3,\quad w_2=\tfrac{1}{30}-8 \al (1-2 \al),\quad
w_3=1/[20160\,\al^3 (1-3 \al)],
\ee
as a function of the  single node parameter $a$. 
Weights are positive 
in the range
${\scriptstyle{0.301980838336061169024588}}<a<{\scriptstyle{0.319260087816400798671151}}$.
At the lower bound of the interval,
the weight $w_1=0$, whereas $w_2=0$ at the upper bound.
The entry in Table~\ref{tab:threea} corresponds to the largest CFL number.
The estimated CFL number is small, compared to the known 15-node element in Table~\ref{tab:three}, making this element less attractive for practical applications.
The difference is smaller when using the 10-node quadrature rule for the stiffness matrix, which leads to a significantly larger estimate of $\CFLnum$.
The weights for the latter are $w_1=1/360$ and $w_2=7/270$ with a node parameter $\myfraca{7-\sqrt{21}}{28}$.

\end{sloppypar}

\rmrk{Try in cml1?}

Table~\ref{tab:three} contains results for the known element with degrees $\{2,3,4\}$  \citep{ref:geevers_new}, which
is simpler than the one with the element degrees $\{2,4,4\}$ found much earlier \cite{ref:mulder1996}.
For the stiffness matrices, the rule \stquad{1} for the element with degrees 2,-,5 in Table~\ref{tab:threea} is also a solution for the current element, but is not spurious-free and has to be discarded.
There are several rules with 14 nodes in Table~\ref{tab:three}.
The first one, \stquad{1,1}, can be found in \citep{ref:grundmann} and was used in \citep{ref:geevers_stiff}.
The polynomial system consists of 4 equations in 6 unknowns.
Its solution can be expressed as function of the first two node parameters,
split out into many different branches if positivity is required.
Minimisation produced \stquadopt{1,2}, with the caveat that it might be a local minimum.

\rmrk{describe other, clean up labelling}

\begin{sloppypar}
The second rule, \stquad{2},
requires the solution of 4 equations in 5 parameters.
Positive solutions are found if the first node parameter obeys $0<a\le {\scriptstyle{0.137285957672957219851842 }}$.
Table~\ref{tab:three}  has an optimized result that is close to the upper bound.

The third rule, \stquad{3}, involves 4 equations in 5 unknowns.

The solutions can be expressed as a function of the first node parameter $a$.
Optimization over the other two solution branches, with
$0<a<a_1$ or 
$a_1<a< {\scriptstyle{0.137204354363517160663447}}$
produced the rule \stquadopt{3,1}.
For $a=a_1=(3-\sqrt{2})/14$, the rule \stquad{3,2} in Table~\ref{tab:three} is obtained,
with weights $w_1=\myfrac{1}{360}$ and $w_2=\myfraca{108\pm\sqrt{2}}{5760}$
and node parameters $\myfraca{3\mp\sqrt{2}}{14}$.
\end{sloppypar}

\rmrk{try in cml1?}

The quadrature rule \stquad{4} has a unique solution, modulo symmetry, and so has \stquad{26}.

\begin{sloppypar}
The quadrature rule \stquad{5} for the stiffness matrix involves 4 equations in 5 unknowns
and has positive solutions for
$\myfraca{1+2 \sqrt{2}}{630}<w_3<\myfrac{7}{270}$ or
${\scriptstyle{0.00607686845197807952000536}} < w_3 < {\scriptstyle{0.0259259259259259259259259}}$.
The result in Table~\ref{tab:three} was obtained by 1-D minimisation.
At the upper bound of $w_3$, $w_2=0$. In addition, the last node of the class $[2,2]$ with index 9 becomes $(1/2,1/2,0,0)$
and moves to a different class, the edge midpoint.
This case is labelled \stquad{6} in the table, with a unique solution and less favourable  $\errmatall$.
Its weights are  $w_1=\myfraca{8\sqrt{2}-3}{2520}$, $w_2=\myfraca{1+2 \sqrt{2}}{630}$ and $w_3=\myfraca{51-10\sqrt{2} }{1260}$
with node parameter  $\myfraca{3+\sqrt{2}}{14}$.
\end{sloppypar}

The tetrahedral element of degree 3 in Table~\ref{tab:threethree} is also known \citep{ref:geevers_new}, as is the first quadrature rule, \stquad{1},
for the stiffness matrices \citep{ref:geevers_stiff}.
There are actually two solutions, modulo symmetry and the second one, \stquad{2}, has a slightly smaller value of $\errmatall$.
The weights for the mass matrix are
$w_1=\myfraca{41-9 \sqrt{2}}{41160}$,
$w_2=\myfraca{8+9 \sqrt{2}}{13720}$,
$w_3=\myfraca{10-\sqrt{2}}{1715}$
and $w_4=\myfrac{3}{140}$.
The node parameters are, subsequently,
$\myfraca{ 3-[3(\sqrt{2}-1)]^{1/2} }{6}$,
$\myfraca{ 4 - \sqrt{2} }{12}$ and $\myfrac{1}{6}$.

\begin{table}
  \caption{A degree-2 tetrahedral element ($p_0=p_1=2$, $p_3=5$) with 14 nodes 
    in 3D.
    For the stiffness matrix, 10 nodes suffice. 
  }\label{tab:threea}\medskip
  {\scriptsize{
   \begin{tabular}{l  r l  l  l }
   rule & class & weight & node parameters & remarks \\
   \toprule
\mquadopt{2,\text{-},5}& 1& 0.00106024229121144047613644 & &$\CFL=0.1471$\\ 
& 2& 0.0121215111478839094743360 & & $\Lambda=3.649$\\ 
& 8& 0.0224241576536293619790263 & 0.308129066037442904319390 &\\ 
\midrule
\stquad{1}& 1& 0.00277777777777777777777778 & &\allerr{14.92}\\ 
& 9& 0.0259259259259259259259259 & 0.0863365823230057140504269 &$\CFLnum=0.2321$\\
\marklast
  \end{tabular} }}
\end{table}
\begin{table}
  \caption{A known degree-2 tetrahedral element with 15 nodes 
    has a more favourable CFL number than the 14-node degree-2 element. For the stiffness matrix,
    several 14-node integration rules are listed with the smallest error by far for \stquad{1,2}.
   }\label{tab:three}\medskip
   {\scriptsize{
    \begin{tabular}{l r  l  l  l }
    rule & class & weight & node parameters & remarks \\
\toprule
\mquad{2,3,4}& 1& 0.00337301587301587301587302 & &$\CFL=0.3284$\\ 
& 2& 0.00634920634920634920634921 & &$\Lambda=1.625$\\ 
& 4& 0.0160714285714285714285714 & &\\ 
& 7& 0.0507936507936507936507937 & &\\ 
\midrule
\stquad{1,1}& 8& 0.0122488405193936601769620 & 0.0927352503108912348701559 &\allerr{0.3799}\\ 
& 8& 0.0187813209530026406750659 & 0.310885919263300609915663 &$\CFLnum=0.3260$\\ 
& 9& 0.00709100346284691054309247 & 0.0455037041256496494057301 & \\ 
\stquadopt{1,2}& 8& 0.0110792758509461122468093 & 0.0880772159733335667633104 &\allerr{0.0448}\\ 
& 8& 0.0168267504736189121759326 & 0.309995447400425430662913 &$\CFLnum=0.3284$\\ 
& 9& 0.00917376022806776149594985 & 0.0576097472208336369395160 &\\ 
[3pt]\stquadopt{2}& 4& 0.00579346744884706261068342 & &\allerr{0.8878}\\ 
& 8& 0.0326121866845538857644211 & 0.137241091530281438591289 &$\CFLnum=0.3414$\\ 
& 9& 0.00217400835551047886104146 & 0.00368270955954534123049295 &\\ 

\stquadopt{3,1}& 2& 0.00221832976797668744396797 & &\allerr{0.8433}\\ 
& 8& 0.0283607853121208351698735 & 0.131447443461074085567475 &$\CFLnum=0.3366$\\ 
& 8& 0.00997838670258080033084118 & 0.322816586241320942220683 &\\ 
\stquad{3,2}& 2& 0.00277777777777777777777778 & &\allerr{1.316}\\ 
& 8& 0.0189955231879119956681947 & 0.113270459830493210799879 &$\CFLnum=0.3170$\\ 
& 8& 0.0185044768120880043318053 & 0.315300968740935360628692 &\\ 
[3pt]\stquad{4}& 2& 0.00205810944851546220477789 & &\allerr{0.8926} \\ 
& 4& 0.00594005740862085571487759 & &$\CFLnum=0.3405$ \\ 
& 8& 0.0326394450852726176446222 & 0.137204354363517160663447 &\\ 
\stquadopt{5}& 1& 0.00298957604762401491577617 & &\allerr{1.179}\\ 
& 8& 0.0162611487338547701734117 & 0.308820729178200911373427 & $\CFLnum=0.3053$\\ 
& 9& 0.0149439612567919210516525 & 0.0572083410744224425608908 &\\ 
\stquad{6}& 1& 0.00329909067420030174222758 & &\allerr{4.086}\\ 
& 2& 0.00607686845197807952000536 & &$\CFLnum=0.2588$\\ 
& 8& 0.0292522733144992456444310 & 0.315300968740935360628692 &\\ 

  \marklast\end{tabular} }}
\end{table}

\begin{table}
  \caption{A known tetrahedral element of degree 3 with 
    32 nodes for the mass matrix and 
    21 nodes for the stiffness matrix. 
  }\label{tab:threethree}\medskip
  {\scriptsize{ \begin{tabular}{l  r  l  l  l }
  rule & class & weight & node parameters & remarks \\
  \toprule
\mquad{3,4,5}& 1& 0.000686882360025319352788746 & &$\CFL=0.1906$\\ 
& 3& 0.00151078149135261337020519 & 0.314210342418032893883175 &$\Lambda=2.928$\\ 
& 5& 0.00500628946800402621061126 & 0.215482203135575412599859 & \\ 
& 8& 0.0214285714285714285714286 & 0.166666666666666666666667 &\\ 
\midrule
\stquad{1}& 7& 0.0189417739968774012264957 & &\allerr{2.531}\\\ 
& 8& 0.0106280309733063567967261 & 0.319555604693565569996655 &$\CFLnum=0.1930$\\ 
& 8& 0.00838281346260630865013490 & 0.0836098229399537906599478 & \\ 
& 10& 0.00597345957717821697106059 & 0.0636610018750175252992355 &\\ 
& & & 0.336251922239849416310752 &\\
[3pt]\stquad{2}& 7& 0.0175734672984752953111294 & &\allerr{2.128}\\ 
& 8& 0.00123565503087609417775721 & 0.0313142510249888223084313 &$\CFLnum=0.1915$\\ 
& 8& 0.0111960044261643789340031 & 0.318387168152114882888947 &\\ 
& 10& 0.00828054679500245657570801 & 0.0636610018750175252992355 &\\ 
& & & 0.264779316613278444841917 &\\

  \marklast\end{tabular} }}
\end{table}
\clearpage\newpage

\subsection{4-simplex}

\rmrk{Are all cases for degree 3 scanned?}

In 4 space dimensions, there is a degree-2 element with 20 nodes, listed in Table~\ref{tab:threetwoa}.
The polynomials are elements of $P_2\oplus \Permfun{\bub_3}$ and
the nodes are the five vertices, midpoints of the ten edges, and centroids of the five 3-faces or facets.
The corresponding degrees are $p_0=p_1=2$ and $p_4=4$.
The weights are $w_1=\myfrac{1}{2520}$, $w_2=\myfrac{1}{1260}$ and $w_3=\myfrac{2}{315}$
and the Lebesgue constant is $\Lambda=493/125$.

For this element, the numerical quadrature of the stiffness matrix has the same number of nodes as the rule for the mass matrix. Rules with less nodes were not found.

The optimised stiffness rule \stquadopt{1} involves 4 equations in 6 parameters. 
\begin{sloppypar}
For the system of 4 equations in 8 unknowns with node pattern $\KK=[0{;}0{,}0{;}0{,}0{,}0{;}0{,}0{,}0{,}0{,}0{;}0{,}4{,}0{,}0{,}0{,}0{,}0]$,
a solution could not be found but may exist.
\end{sloppypar}

Rule \stquadopt{2} 
involves  4 equations in 5 unknowns. The solution can be expressed as functions
of the first node parameter with $0 < a \le {\tiny }0.138938103986157644354190$ and $a\ne \myfraca{6-\sqrt{6}}{30}$. The listed result has the smallest $\errmatall$.

The stiffness rule with 
$\KK=[0{;}1{,}0{;}0{,}0{,}0{;}0{,}0{,}0{,}0{,}0{;}0{,}2{,}0{,}0{,}0{,}0{,}0]$ has many solutions. Optimisation
produces a degenerate solution which happens to be the rule \stquad{3} with weights
$w_1=\myfrac{1}{2520}$, $w_2=\myfrac{4}{945}$ and $w_3=\myfrac{5}{1512}$ for a node parameter $1/10$.

Rule \stquadopt{4} is obtained for 
4 equations into 7 unknowns.
The solution for rule \stquad{5} 
is unique,
as the one for \stquad{6}, which is the same as the quadrature rule for the  mass matrix.

Table~\ref{tab:threetwo} contains the weights for a second degree-2 element.
It has degree 2 on the edges, 3 on the faces, 4 on the facets or 2-faces, and 5 in the interior.
The weights are $w_1=\myfrac{149}{362880}$, $w_2=\myfrac{13}{22680}$, $w_3=\myfrac{1}{896}$, $w_4=\myfrac{8}{2835}$ and $w_5=\myfrac{625}{72576}$.
The corresponding nodes are the centroids of the $m$-faces, for $m=0,1,\ldots,4$.

Several quadrature rules for the stiffness matrix with 30 nodes were found, but the existence of rules with less nodes cannot be excluded and
many of the polynomial systems for the 30-node rules were too difficult to solve in a reasonable amount of time.

\newcommand{\ncix}{1}   
\newcommand{\ncxii}{2}  
\newcommand{\ncxxiv}{3} 
\newcommand{\ncxxx}{4}  
Rule \stquadopt{\ncix} involves 5 equations in 6 unknowns and only the optimised result is listed in Table~\ref{tab:threetwo}.
Rules \stquad{\ncxii} and \stquad{\ncxxiv} involve 5 equations in 5 unknowns with a unique solution,
as does rule \stquad{\ncxxx} with 
weights $w_1=\myfraca{33 \sqrt{5}-41}{80640}$,
$w_2=\myfraca{3 \sqrt{5}-1}{10080}$, $w_3=\myfrac{3 \left(1+\sqrt{5}\right)}{8960}$ and
$w_4=\myfrac{3 \left(5-\sqrt{5}\right)}{1792}$ with node parameter $\myfraca{5+\sqrt{5}}{30}$.

\newcommand{\nxxxv}{2} 
\newcommand{\nxxxvii}{1} 
Finally, Table~\ref{tab:three3} lists weights and node parameters for a degree-3 element in 4 space dimensions, with 80 nodes
for the mass matrix and 46 for the stiffness matrix.
For the quadrature of the mass matrix,
some parameters have a simple representation, for instance,
\[  a_{8,1,1}=\left\{1-[ (7+\myfrac{8}{\sqrt{11}})/95]^{1/2}\right\}/4,\quad \al_{13,1,1}=\myfraca{1-1/\sqrt{11}}{5},\]
\[  w_5=\myfrac{121(11+56\sqrt{11})}{7257600}.
\]
Other entries are too long to be reproduced over here.

The node pattern $\KK=[0;0,0;0,0,0;0,0,0,0,0;1,3,1,1,0,0,0]$ with 46 nodes
leads to infinitely many quadrature rules for the stiffness matrix.
Optimization with respect to $\errmatall$ lets one of three nodes in class $[4,1]$ ($K_{13}$) move to
class $[1]$ ($K_1$) and produces the same result as \stquad{\nxxxvii,1}.

\rmrk{more stiffness cases?}

\rmrk{Are there more??}

\rmrk{$\{3,4,4,7\}$??}
\begin{table} 
  \caption{An element of degree 2 ($p_0=p_1=2$, $p_4=4$) in 4D 
    with 20 nodes
    for the mass matrix and 20 for the stiffness matrix.}\label{tab:threetwoa}\medskip
  {\scriptsize{
    \begin{tabular}{l r  l  l  l }
        rule & class & weight & node parameters & remarks \\
        \toprule
\mquad{2,\text{-},4,\text{-}}& 1& 0.000396825396825396825396825 & &$\CFL=0.1789$\\\ 
& 2& 0.000793650793650793650793651 & &$\Lambda=3.944$\\ 
& 7& 0.00634920634920634920634921 & &\\ 
\midrule
\stquadopt{1}& 13& 0.000463047128668606081555787 & 0.0235803592776537401257936 &\allerr{0.2982}\\ 
& 13& 0.00193826296349263899875603 & 0.244634182009346471232919 &$\CFLnum=0.1804$\\ 
& 14& 0.00296601162058604412651076 & 0.0705403895332606336540964 &\\ 
\stquadopt{2}& 7& 0.000889938669226683906602000 & &\allerr{0.3284} \\ 
& 13& 0.000747942352712646481290838 & 0.0493624021191759830090195 &$\CFLnum=0.1902$\\ 
& 14& 0.00334772615569700147272025 & 0.0796632726401685365816004 &\\ 
[3pt]\stquad{3}& 2& 0.000396825396825396825396825 & &\allerr{0.9810}\\ 
& 7& 0.00423280423280423280423280 & &$\CFLnum=0.1362$\\ 
& 13& 0.00330687830687830687830688 & 0.1 &\\ 
[3pt]\stquadopt{4}& 1& 0.000311324842891441479609980 & &\allerr{0.3013}\\ 
& 13& 0.00227335460158547594966056 & 0.243317819101603199938769 &$\CFLnum=0.1804$\\ 
& 14& 0.00287432694442820795203140 & 0.0657782802911012693956299 &\\ 
[3pt]\stquad{5}& 1& 0.000291032680706059271471708 & &\allerr{0.4496} \\ 
& 7& 0.000253880391325866013076778 & &$\CFLnum=0.2097$\\ 
& 14& 0.00389421013065070402439242 & 0.0782690441575398487439937 &\\ 
[3pt]\stquad{6}& & same as \mquad{2,\text{-},4,\text{-}} & &\allerr{1.965}\\ 
& & & & $\CFLnum=0.1117$ \\ 

\marklast\end{tabular}}}
\end{table}
\begin{table} 
  \caption{A second element of degree 2 has 
    31 nodes in four space dimensions for the mass matrix and 30 for the stiffness matrix.}\label{tab:threetwo}\medskip
   {\scriptsize{
    \begin{tabular}{l  r  l  l  l }
    rule & class & weight & node parameters & remarks \\
\toprule

\mquad{2,3,4,5}& 1& 0.000410604056437389770723104 & &$\CFL=0.3017$\\ 
& 2& 0.000573192239858906525573192 & & $\Lambda=1.803$\\ 
& 4& 0.00111607142857142857142857 & &\\ 
& 7& 0.00282186948853615520282187 & &\\ 
& 12& 0.00861166225749559082892416 & &\\ 
\midrule
\stquadopt{\ncix}& 2& 0.000191852529355864471731846 & &\allerr{0.3084}\\ 
& 7& 0.000789815419126082369412440 & &$\CFLnum=0.3093$\\ 
& 13& 0.00546419545327554614783932 & 0.115795357027714070980707 &\\ 
& 14& 0.000847808701109987936308939 & 0.319511843067379711333355 &\\ 
\stquad{\ncxii}& 2& 0.000197603167391832207462443 & &\allerr{0.5908}\\ 
& 4& 0.000539373841340480257392759 & &$\CFLnum=0.2965$\\ 
& 7& 0.00123494437899886025457195 & &\\ 
& 13& 0.00562443493686984814905098 & 0.116358009970857270976190 &\\ 
\stquad{\ncxxiv}& 1& 0.000291064526770509063833014 & &\allerr{0.7033} \\ 
& 4&   $9.59219805561710955247146\!\cdot\!10^{-6}$ & &$\CFLnum=0.3085$\\ 
& 7& 0.000216344774919483390481248 & &\\ 
& 14& 0.00390336981776605332995706 & 0.0784615942469140643522620 &\\ 
\stquad{\ncxxx}& 1& 0.000406625040395499255723000 & &\allerr{2.061} \\ 
& 2& 0.000566290072668588203296381 & & $\CFLnum=0.2195$\\ 
& 4& 0.00108350490318073315727986 & &\\ 
& 13& 0.00462711834123919135645786 & 0.241202265916659656546972 &\\ 

\marklast\end{tabular}}}
\end{table}

\begin{table} 
  \caption{A degree-3 element in 4 space dimensions with 
   80 nodes.
   A 46-nodes quadrature rule for the stiffness matrix, \stquad{\nxxxvii}, has two solutions.
For \stquad{\nxxxv}, also with 46 nodes, one of infinitely many solutions is shown.
Minimisation of $\errmatall$ produces \stquad{\nxxxvii,1}.
} 
  \label{tab:three3}\medskip
   {\scriptsize{
    \begin{tabular}{l r  l  l  l }
    rule & class & weight & node parameters & remarks \\
    \toprule
\mquad{3,4,5,6}& 1& 0.0000634687423149349589944868 & &$\CFL=0.1767$\\ 
& 3& 0.0000973913124562213436677687 & 0.339361359211720168522736 & $\Lambda=3.907$\\ 
& 5& 0.000236701596558937161759499 & 0.224923912791330495226026 &\\ 
& 8& 0.000795038920444809458280685 & 0.171309689469020940942630 &\\ 
& 13& 0.00327993408006065219598804 & 0.139697731084447275470638 &\\ 
\midrule
\stquad{\nxxxvii,1}& 1& 0.0000135796959834559115220899 & &\allerr{0.9307}\\ 
& 12& 0.00251361779741206305919335 & &$\CFLnum=0.1789$\\ 
& 13& 0.000143411908756882943527805 & 0.0469282497504013574209646 &\\ 
& 13& 0.00130249665210575976369628 & 0.242337281353804306909773 &\\ 
& 14& 0.00114995179198013822391338 & 0.303785526045443074349591 &\\ 
& 15& 0.00101780448326113641373043 & 0.0598508375909205620140451 &\\ 
& & & 0.253184114259116209547217 &\\
\stquad{\nxxxvii,2}& 1& 0.0000143649822356490473801218 & &\allerr{0.9434}\\ 
& 12& 0.00252384195775506540526455 & & $\CFLnum=0.1781$\\ 
& 13& 0.000435289306495329172060708 & 0.0678530017485141894267718 &\\ 
& 13& 0.00129937904955452706670238 & 0.242370040012056372359001 &\\ 
& 14& 0.00114995179198013822391338 & 0.303785526045443074349591 &\\ 
& 15& 0.000944907004884134629577612 & 0.0598508375909205620140451 &\\ 
& & & 0.265490699387678247005096 &\\
\stquad{\nxxxv,1}& 12& 0.00249856167458254646466376 & &\allerr{0.9323}\\ 
& 13& 0.0000237672017911754952149392 & 0.00722636396987802183985101 &$\CFLnum=0.1780$\\ 
& 13& 0.000196546166835736134567618 & 0.0579425769792456787565224 &\\ 
& 13& 0.00130664508237860938292903 & 0.242292938358339861353259 &\\ 
& 14& 0.00114995179198013822391338 & 0.303785526045443074349591 &\\ 
& 15& 0.00100168974086275664496556 & 0.0598508375909205620140451 &\\ 
& & & 0.255706751036907967802450 &\\
\stquadopt{\nxxxv}=\stquad{\nxxxvii,1}\\

\marklast\end{tabular}}}
\end{table}

\section{Conclusions}\label{sec:conc}

Mass lumping of mass matrix in the finite-element discretization of the wave equation allows for explicit time stepping.
The entries in the diagonal lumped mass matrix are equal or proportional to numerical quadrature weights.
Higher-degree polynomials on the $m$-faces, with $m>1$, are needed to preserve the spatial accuracy after lumping on the $n$-simplex, with $n>1$.

New quadrature rules for the 4-simplex have been presented.
A new tetrahedral degree-2 element has one node less than a known element,
but also has a considerably smaller CFL number, which determines the maximum allowable time step.
This makes the element less favourable in spite of its slightly lower computational cost.

Given a mass-lumped element, numerical quadrature of the stiffness matrices generally is more efficient than their exact evaluation.
New dedicated quadrature rules for elements on the 2- and 4-simplex were presented, and some additional rules beyond the known ones on the 3-simplex.

The construction of a quadrature rule involves the solution of a polynomial system of equations. Only those with positive weights are of interest.
If there are multiple solutions, the one with the largest CFL number is selected.
Likewise, if there are multiple solutions for the quadrature equations related to the stiffness matrices, the one with smallest error
in the sum of the squared Frobenius norms of the individual matrices can be chosen.

The search for quadrature rules, in particular for the stiffness matrices, involved many cases. The existence or non-existence of positive
solutions of the polynomial systems could not always be established, in particular for the higher degrees and dimensions. Therefore, there may exist other and less costly rules.

Simpler elements with less nodes can be found by considering subsets of the higher-degree polynomials used to enrich the elements to preserve accuracy, as in \citep{ref:geevers_new} for tetrahedral elements of degree 4. Here, this approach has not been considered.

Weak instead of strict face-conformity may also lead to elements with less nodes,
although the proposed quadratic element on the 4-simplex in \cite{ref:petrov2023} actually requires 21 nodes, whereas one of the two quadratic elements over here has only 20.

\appendix

\section{Simplification of the vertex monomial}\label{app:lower}

\begin{theorem}{
    In the definition of $U$ in equation~\eqref{eq:polspace},
    $V_0=\left\{ x_0^{p_0} \right\}$ can be replaced by $V_0=\left\{ x_0 \right\}$ if $p_0=p_1\ge 2$
    and $P_{p_1}\subset U$.
}\end{theorem}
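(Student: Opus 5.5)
Write $U$ for the space with $V_0=\{x_0^{p_0}\}$ and $U'$ for the space with $V_0=\{x_0\}$; both are spans of the union of permuted sets. The plan is to show that the two spans coincide. Since the vertex sets $\Permfun{V_0}$ are the only difference, it suffices to show two things: each $x_k^{p_0}$ lies in $U'$, and each $x_k$ lies in $U$. By the permutation symmetry of both constructions it is enough to treat $k=0$.

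The key identity comes from $\sum_{j=0}^{\ndim} x_j=1$. For the direction $x_0\in U$, multiply $1=(\sum_j x_j)^{p_1-1}$ by $x_0$. This writes $x_0$ as a homogeneous polynomial of degree $p_1$ in the barycentric coordinates, i.e.\ as an element of $\pspan{P_{p_1}(x_0,\ldots,x_\ndim)}$. The hypothesis $P_{p_1}\subset U$ then gives $x_0\in U$ immediately.

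The substantive direction is $x_0^{p_1}\in U'$. Here I will show that every homogeneous monomial of degree $p_1$ lies in the span of the sets $\Permfun{V_m}$ for $m\ge 1$ together with $\{x_0,\ldots,x_\ndim\}$. Take a monomial $\prod_k x_k^{\ell_k}$ with $\sum\ell_k=p_1$, and let $S$ be its support, with $|S|=m+1$.
\begin{itemize}
\item If $m\ge1$, the monomial equals $\bub_m$ on the face $S$ times a monomial of degree $p_1-m-1$ in the variables of $S$. Since $p_m\ge p_1$ is required for $P_{p_1}\subseteq U$ (the degrees are nondecreasing in $m$), the factor $P_{p_m-m-1}(S)$ contains all monomials of degree $p_1-m-1$, and the monomial lies in $\Permfun{V_m}$. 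This is exactly the part shared by $U$ and $U'$.
\item If $m=0$, the monomial is a pure power $x_0^{p_1}$, which is the case to handle.
\end{itemize}

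For the pure power, write
\[
x_0^{p_1}=x_0\,(1-x_1-\cdots-x_\ndim)^{p_1-1}=x_0-x_0\bigl[1-(1-x_1-\cdots-x_\ndim)^{p_1-1}\bigr].
\]
The bracket is a polynomial with no constant term in $x_1,\ldots,x_\ndim$. So every term in the second part has the form $x_0$ times a monomial $\prod_{k\ge1}x_k^{\ell_k}$ with $1\le\sum\ell_k\le p_1-1$. Homogenize each such term to degree $p_1$ by multiplying by a suitable power of $\sum_j x_j=1$. Every resulting monomial contains $x_0$ and at least one other $x_k$, so its support has $m\ge1$ and it lies in some $\Permfun{V_m}$ by the first case above. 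Hence $x_0^{p_1}\in U'$, and the two spans agree.

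The main obstacle I expect is the bookkeeping in the first case. One must confirm that the hypothesis $P_{p_1}\subset U$ (equivalently $p_m\ge p_1$ for all $m$) really ensures that every monomial of degree up to $p_1$ containing $x_0$ and another variable lies in the relevant $\Permfun{V_m}$. This includes lower-degree multiples obtained before homogenization, which I will avoid by always homogenizing to exactly degree $p_1$. The condition $p_0=p_1\ge2$ is what makes the replacement a genuine simplification. For $p_1=1$ the identity is trivial, since $x_0^{p_0}=x_0$ already.
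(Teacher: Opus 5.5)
Your direction $x_0\in U$ is fine. Your expansion of $x_0^{p_1}-x_0$ is the telescoped form of the paper's recursion $x_0^q=x_0^{q-1}-x_0^{q-1}\sum_{k\ge1}x_k$, so the route is the paper's.

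The gap is your first bullet, on which everything else rests. By \eqref{eq:monomials}, $P_p$ contains only monomials of \emph{exact} degree $p$, so every element of $\Permfun{V_m}$ is homogeneous of degree $p_m$. Hence, for $m<\ndim$, a degree-$p_1$ monomial supported on $m+1$ variables lies in $\pspan{\Permfun{V_m}}$ only when $p_m=p_1$. In the paper's elements typically $p_m>p_1$ for $m\ge2$. Raising the degree with powers of $\sum_{j=0}^{\ndim}x_j$ then produces monomials in variables outside the face. For example, for the degree-3 tetrahedron with $(p_1,p_2,p_3)=(3,4,5)$ one has $x_0x_1x_2=\bub_2(x_0+x_1+x_2)+\bub_3$. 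This lies in $\pspan{\Permfun{V_2}\cup V_3}$ but not in $\pspan{\Permfun{V_2}}$.

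Homogenizing ``to exactly degree $p_1$'' does not avoid this, because $p_1$ is not the degree of $V_m$. A downward induction on $m$ could rescue the bullet: homogenize to degree $p_m$, push the spill-over into higher-dimensional faces, and stop at $m=\ndim$. But that needs $p_m\le p_{m'}$ for $m<m'$, which the hypothesis does not supply. Nor is $P_{p_1}\subset U$ equivalent to $p_m\ge p_1$: for $\ndim=3$ and $(p_1,p_2,p_3)=(3,5,4)$ one gets $x_0x_1x_2\notin U$.

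The missing idea is to use the hypothesis $P_{p_1}\subset U$ directly instead of re-deriving memberships from the degrees. Put $W=\pspan{\bigcup_{m\ge1}\Permfun{V_m}}$. Every element of $\Permfun{V_m}$ with $m\ge1$ contains a product of at least two distinct barycentric coordinates, so it vanishes at all vertices. By contrast, $x_k^{p_1}$ is $1$ at vertex $k$ and $0$ at the others. The polynomial $f=x_0^{p_1}-x_0$ lies in $\pspan{P_{p_1}}\subset U$ by the same homogenization you used for $x_0$. So $f=\sum_k c_kx_k^{p_1}+w$ with $w\in W$. Since $f$ vanishes at every vertex, evaluating there gives all $c_k=0$, i.e.\ $f\in W$. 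Then $x_0^{p_1}=x_0+f\in U'$ and $x_0=x_0^{p_1}-f\in U$ at once, with no monomial bookkeeping and no monotonicity of the $p_m$.

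The paper's own proof glosses over the same point. Its step $V_{1,q-1}\subset\pspan{V_{1,q}}$ fails for $\ndim\ge2$: on the triangle $x_0x_1=x_0^2x_1+x_0x_1^2+x_0x_1x_2$, and the last term is not in the span of the edge functions. The vertex-evaluation argument is what actually makes the hypothesis do its work.
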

\begin{proof}{
Because $\sum_{k=0}^\ndim x_k=1$, $x_0^{q-1}=x_0^{q-1}\sum_{k=0}^\ndim x_k= x_0^{q}+x_0^{q-1}\sum_{k=1}^\ndim x_k$
or $x_0^{q} = x_0^{q-1}-x_0^{q-2}\sum_{k=1}^\ndim x_0 x_k$ for $q\ge 2$.
Therefore, $x_0^q \in \pspan{ \left\{  x_0^{q-1} \right\} \cup \Permfun{  V_{1,q} } }$.
Here, $V_{1,q}=\left\{\bub_1\right\}\otimes P_{q-2}(x_0,x_1)$,
that is, $V_1$ for $p_1=q$ at $m=1$ in equation~\eqref{eq:vv}.
Starting at $q=p\ge 2$ and using  $V_{1,q-1}\subset \pspan{V_{1,q}}$,
recursion with decreasing $q$ ends at
$x_0^ 2\in \pspan{ \left\{x_0\right\} \cup \Permfun{  \left\{\eta_1\right\} }  }$.\
The right-hand side is a subset of $\left\{ x_0 \right\} \cup \Permfun{ V_{1,q} }$.
This implies that $x_0^{p_1}\in \pspan{ \left\{ x_0 \right\} \cup \Permfun{ V_{1,p_1} } }$.

The reverse, $x_0\in \pspan{ \left\{ x_0^{p_1} \right\} \cup \Permfun{ V_{1,p_1} } }$, follows in a similar way.
}\end{proof}

\rmrk{SOMETHING MISSING? The $x_k$ causes problems for $k>1$? Is $P_{p_1}\subset U$ needed? do example?}
\rmrk{reference to degree lifting or degree raising or degree elevation? Farin?   Dahmen en Micchelli? Szafnicki?}

\bibliographystyle{elsarticle-num}
\bibliography{refs}
\end{document}